\newcommand{\catC}{\mathcal{C}}
\newcommand{\Qnd}{\mathsf{Qnd}}
\newcommand{\Qndt}{\mathsf{Qnd}^{\ast}}
\newcommand{\Inn}{\mathsf{Inn}}
\newcommand{\Aut}{\mathsf{Aut}}
\newcommand{\Eq}{\mathsf{Eq}}
\newtheorem{theorem}{Theorem}[section]
\newtheorem{lemma}[theorem]{Lemma}
\newtheorem{proposition}[theorem]{Proposition}
\theoremstyle{definition}
\newtheorem{definition}[theorem]{Definition}
\newtheorem{examples}[theorem]{Examples}
\theoremstyle{remark}
\newtheorem{remark}[theorem]{Remark}
\def\cartesien{%
    \ar@{-}[]+R+<6pt,-1pt>;[]+RD+<6pt,-6pt>%
    \ar@{-}[]+D+<1pt,-6pt>;[]+RD+<6pt,-6pt>%
  }
\begin{document}
	  \title{{Closure operators in the category of quandles}}
	  \author{Val\'erian Even}
	  %\footnote{valerian.even@uclouvain.be}
	  \author{Marino Gran}
	  	  \address{Institut de Recherche en Math\'ematique et Physique \\  Universit\'e catholique de Louvain \\ Chemin du Cyclotron 2, 1348 Louvain-la-Neuve, Belgium.}
	  %\footnote{marino.gran@uclouvain.be}
	  %\footnote{montoli@mat.uc.pt}
	  
\maketitle
%\begin{frontmatter}
%\title{Elsevier \LaTeX\ template\tnoteref{mytitlenote}}
%\tnotetext[mytitlenote]{Fully documented templates are available in the elsarticle package on \href{http://www.ctan.org/tex-archive/macros/latex/contrib/elsarticle}{CTAN}.}

%% or include affiliations in footnotes:
%\author[mymainaddress,mysecondaryaddress]{Elsevier Inc}
%\ead[url]{www.elsevier.com}

%\author[mysecondaryaddress]{Global Customer Service\corref{mycorrespondingauthor}}
%\cortext[cor2]{Corresponding author}

\begin{abstract}
	We study a regular closure operator in the category of quandles. We show that the regular closure operator and the pullback closure operator corresponding to the reflector from the category of quandles to its full subcategory of trivial quandles coincide, we give a simple description of this closure operator, and analyze some of its properties. The category of algebraically connected quandles turns out to be a connectedness in the sense of Arhangel'ski\v{\i} and Wiegandt corresponding to the full subcategory of trivial quandles, while the disconnectedness associated with it is shown to contain all quasi-trivial quandles. The separated objects for the pullback closure operator are precisely the trivial quandles.
	A simple formula describing the effective closure operator on congruences corresponding to the same reflector is also given.
\end{abstract}
%\begin{abstract}
%This template helps you to create a properly formatted \LaTeX\ manuscript.
%\end{abstract}

%\smallskip
%\noindent \textbf{Keywords.} Quandle, closure operator, connectedness, permutability of congruences

%\begin{keyword}
%\texttt{elsarticle.cls}\sep \LaTeX\sep Elsevier \sep template
%\MSC[2010] 00-01\sep  99-00
%\end{keyword}

%\end{frontmatter}

%\linenumbers

\section{Introduction}

A \emph{quandle} is a set $X$ equipped with two binary operations $\lhd$ and $\lhd^{-1}$ such that the following identities hold (for all $x,\ y,\ z \in X$):
\begin{itemize}
	\item[(A1)] $x \lhd x = x = x \lhd^{-1} x$ (idempotency);
	\item[(A2)] $(x \lhd y) \lhd^{-1} y = x = (x \lhd^{-1} y) \lhd y$ (right invertibility);
	\item[(A3)] $(x \lhd y) \lhd z = (x \lhd z) \lhd (y \lhd z)$ and $(x \lhd^{-1} y) \lhd^{-1} z = (x \lhd^{-1} z) \lhd^{-1} (y \lhd^{-1} z)$ (self-distributivity).
\end{itemize}

This structure, first studied by D.~Joyce~\cite{Joyce} and, independently, \linebreak by S.~V.~Matveev~\cite{M}, captures some fundamental properties of group conjugation: for example, the Wirtinger presentation of a knot group only involves relations of type $z = y^{-1} x y$ so that it is more natural to present a quandle rather than a group. In particular, the knot quandle of a knot is an invariant which is complete up to orientation.

If $X$ and $Y$ are two quandles, a function $f \colon X \rightarrow Y$ is a quandle homomorphism if it preserves the operations: $f(x \lhd x') = f(x) \lhd f(x')$ and $f(x \lhd^{-1} x') = f(x) \lhd^{-1} f(x')$ for all $x,\ x' \in X$. Quandles and quandle homomorphisms constitute a \emph{variety} of universal algebras. A quandle $X$ is \emph{trivial} when it satisfies the additional identity $x \lhd x' = x = x \lhd^{-1} x'$, for any $x,\ x' \in X$. By Birkhoff's Theorem the category $\Qndt$ of trivial quandles is a subvariety of the variety $\mathsf{Qnd}$ of all quandles, since it is obtained by requiring an additional identity to the ones defining the variety $\mathsf{Qnd}$ (see \cite{BS}, for instance). Observe that trivial quandles are the same thing as \emph{sets}: indeed, given a set $S$, the first projection $p_1 \colon S \times S \rightarrow S$ obviously yields the unique possible trivial quandle structure on $S$.

The forgetful functor $U \colon \Qndt \rightarrow \Qnd$ has a left adjoint 
$\pi_0 \colon \Qnd \rightarrow \Qndt$
\begin{equation}\label{adj}
\vcenter{\hbox{\begin{tikzpicture}[scale=0.3]
		\node[] (X) at (-5,0) {$\Qnd$};
		\node[] (Y) at (5,0) {$\Qndt$};
		\node[] (C) at (0,0) {$\perp$};
		\draw[->] (X) to [bend left=25] node[above]{$\pi_0$} (Y);
		\draw[<-] (X) to [bend right=25] node[below]{$U$} (Y);
		\end{tikzpicture}}\tag{A}}
\end{equation}
called the \emph{connected component functor}, which is constructed as follows.
The axioms in the definition of a quandle $X$ imply that the right actions, denoted by~$\rho_y \colon X \rightarrow X$ and defined by~$\rho_y(x) = x \lhd y$ for all $x \in X$, are automorphisms. Writing $\Inn(X)$ for the group of \emph{inner automorphisms} of $X$, i.e. the subgroup of the group $\Aut(X)$ of automorphisms of $X$ generated by all such $\rho_y$, one calls \emph{connected component} of $X$ an orbit under the action of $\Inn(X)$.
Two elements~$x$ and~$y$ of $X$ are in the same orbit if one can find a chain of elements~$x_i \in X$, for~$1~\leq~i~\leq~n$, linking the elements~$x$ and $y$
{ \[x \lhd^{\alpha_1} x_1 \lhd^{\alpha_2} \dots \lhd^{\alpha_n} x_n = y,\]} with $\lhd^{\alpha_i} \in \{\lhd, \ \lhd^{-1}\}$ for all $1 \leq i \leq n$, where one omits writing the parentheses according to the following convention:
\[x \lhd^{\alpha_1} x_1 \lhd^{\alpha_2} \dots \lhd^{\alpha_n} x_n := (\dots ((x \lhd^{\alpha_1} x_1) \lhd^{\alpha_2} x_2) \dots )\lhd^{\alpha_n} x_n.\] 
We write $[x]_X$ for the orbit of $x \in X$ under the action of $\Inn(X)$. 
The functor~$\pi_0 \colon \Qnd \rightarrow \Qndt$ sends a quandle $X$ to its trivial quandle (=set) $\pi_0(X)$ of connected components: $\pi_{0}(x)= [x]_X$.
The $X$-component $\eta_X \colon X \rightarrow U \pi_0 (X)$ of the unit of the adjunction is simply given by projection of $X$ to its trivial quandle (=set) of connected components.

In this article we investigate the pullback closure operator on subobjects~\cite{Holgate} in $\Qnd$ associated with the adjunction \eqref{adj}, which actually coincides with the corresponding regular closure operator (see \cite{S, DG, Clementino, DT}). We observe that the reflector $\pi_0 \colon \Qnd \to \Qndt$ preserves finite products (Lemma \ref{products}), and then study some basic properties of the pullback closure operator (Proposition \ref{properties}). It is then shown that the algebraically connected quandles are exactly the $c$-connected objects for this closure operator (Proposition \ref{connected}), the $c$-separated objects being the trivial quandles (Proposition \ref{separated}). The disconnectedness in the sense of Arhangel'ski\v{\i} and Wiegandt  \cite{AW} associated with the category of connected quandles is a category containing all the quasi-trivial quandles (in the sense of \cite{Inoue}), and is then strictly larger than the category of trivial quandles.

We then turn our attention to the study of the
\emph{effective closure operator} on congruences \cite{BGM}, again corresponding to the adjunction \eqref{adj}. In particular a simple formula in order to compute the closure is given (Proposition \ref{description}), which is based on a recent permutability result for quandles established in \cite{EG}. \\

\noindent {\bf Acknowledgements.} The authors would like to thank professors Maria Manuel Clementino and Sandra Mantovani for useful conversations on the subject of this article. The first author acknowledges the financial support of the F.N.R.S. (Cr\'edit pour bref s\'ejour \`a l'\'etranger) and of the project ``Fonds d'Appui \`a l'Internationalisation'' of the UCL that made his scientific visit  at the EPFL in Lausanne in June 2014 possible. It is during this research visit that part of this work was done. The first author also thanks the members of the Homotopy Theory Research Group at the EPFL for their kind hospitality.

\section{Pullback closure operator}\label{pullbackclosure}

In this article $\catC$ will always denote a finitely complete and \emph{regular} category (in the sense of \cite{Barr}): this means that
\begin{itemize}
	\item every arrow $f \colon X \rightarrow Y$ in $\mathcal C$ has a factorization $f = i \circ p$ 
	\begin{equation}\vcenter{\hbox{\begin{tikzpicture}[scale=0.7]
			\node[] (X) at (-2,0) {$X$};
			\node[] (Y) at (2,0) {$Y$};
			\node[] (I) at (0,-2) {$I$};
			\draw[->] (X) to node[above]{$f$} (Y);
			\draw[->] (X) to node[below left]{$p$} (I);
			\draw[->] (I) to node[below right]{$i$} (Y);
			\end{tikzpicture}\label{fact}}}
	\end{equation}
	with $p$ a regular epimorphism and $i$ a monomorphism ($I \xrightarrow{i} X$  is called the \emph{regular image} of $f$);
	\item regular epimorphisms are pullback stable in $\mathcal C$.
\end{itemize}
The classes $\mathcal E$ of regular epimorphisms and $\mathcal M$ of monomorphisms form a \emph{stable factorization system} $(\mathcal E,\mathcal M)$ in $\mathcal C$ \cite{CHK}. We shall refer to an arrow $M \xrightarrow{m} X$ in $\mathcal M$ as a \emph{subobject} of $X$, and write $m \in \mathsf{Sub}(X)$. Given any arrow $f \colon X \rightarrow Y$ we denote by $f(m)$ the subobject of $Y$ obtained by taking the regular image along~$f$ of $M \xrightarrow{m} X$, that is, the regular image of the composite $f \circ m$. When~$S \xrightarrow{s} Y$ is a subobject of $Y$ we write $f^{-1}(s)$ for the subobject of $X$ which is the \emph{inverse image} of $s$ along $f$.
\begin{definition} (Cf. \cite{DG, DT, CT})
	A \emph{closure operator} $c$ in $\catC$ associates, with any subobject $ M \xrightarrow{m} X$, another subobject $c_X(M) \xrightarrow{c_X(m)} X$, the closure of $m$ in $X$. This application satisfies the following properties for any $m,\ n \in \mathsf{Sub}(X)$, and $f : X \rightarrow Y$:
	\begin{enumerate}
		\item $m \le c_X(m)$;
		\item if $m \le n$, then $c_X(m) \le c_X(n)$;
		\item $f(c_X(m)) \le c_Y(f(m))$.
	\end{enumerate}
\end{definition}

We say that $ M \xrightarrow{m} X$ is \emph{closed} if $m = c_X(m)$, and that it is \emph{dense} \linebreak if $c_X(m) = 1_X$, for all $X \in \catC$. 
The closure operator factors every subobject~{$M \xrightarrow{m} X$} as \[M \xrightarrow{m/c_X(m)} c_X(M) \xrightarrow{c_X(m)}X,\]
where we write $m/c_X(m)$ for the unique arrow such that $c_X(m) \circ m/c_X(m) = m$.

The closure operator $c$ is said to be \emph{idempotent} if $c_X(m)$ is closed, and \emph{weakly hereditary} if $m/c_X(m)$ is dense.

Recall that a pointed endofunctor $(R,r)$ is given by an endofunctor $R \colon \catC \to~\catC$ and a natural transformation $r \colon 1_{\mathcal C} \to R$. Any pointed endofunctor in a category with a stable factorization system $(\mathcal E,\mathcal M)$ induces a corresponding closure operator, called the \emph{pullback closure operator} \cite{Holgate}, whose definition we are now going to recall.  If $M \xrightarrow{m} X$ belongs to $\mathcal{M}$, construct the following diagram where $n\circ e = Rm$ is the $(\mathcal{E},\mathcal{M})$-factorization of $Rm$, and $c_X(m)$ is the pullback of $n$ along $r_X$ :
%\begin{equation}\label{closurediagram}
%\vcenter{\xymatrix{M \ar[rr]^m \ar@{.>}[dr]_(.6){m/c_X(m)} \ar[dd]_{r_M}& & X \ar[dd]^{r_X}\\
%& c_X(M) \ar[dd] \ar[ur]_{c_X(m)}& \\
%RM \ar[dr]_e \ar@{-}[r] &\ar[r]^-{Rm} & RX \\
%& N \ar[ur]_n &}}
%\end{equation}

\begin{equation}
\vcenter{\hbox{\begin{tikzpicture}[scale=0.3]\label{closurediagram}
		\node[] (M) at (-8,7) {$M$};
		\node[] (X) at (8,7) {$X$};
		\node[] (N) at (0,-5) {$N$};
		\node[] (N') at (0,4) {$c_X(M)$};
		\node[] (P) at(0,-2) {};
		\node[] (M') at (-8,-2) {$RM$};
		\node[] (X') at (8,-2) {$RX$.};
		\draw[->] (M) to node[above]{$m$} (X);   
		\draw[->] (M) to node[left]{$r_M$} (M');
		\draw[->] (X) to node[right]{$r_X$} (X');
		\draw[->] (M') -- (P) to node[above]{$Rm$} (X');    
		\draw[->] (M') to node[below,rotate=-20]{$e$} (N);   
		\draw[->] (N) to node[below,rotate=20]{$i$} (X');
		\draw[dotted,->] (M) to node[below,rotate=-20]{$m/c_X(m)$} (N');
		\draw[->] (N') to node[below,rotate=20]{$c_X(m)$} (X);
		\draw[->] (N') to (N);  
		\draw[] (2,4) -- (2,2) -- (0.5,1.4);
		\end{tikzpicture}}}
\end{equation}

Then the assignment $c_X \colon \mathsf{Sub}(X) \rightarrow  \mathsf{Sub}(X)$ defined by $c_X(m) = r^{-1}_X(i)$ is a closure operator, called the pullback closure operator corresponding to $(R,r)$.

It is well-known that, in particular, any full reflective subcategory $\mathcal X$ of a regular category $\mathcal C$, with the property that each component of the unit of the adjunction is a regular epimorphism, induces an \emph{idempotent pullback closure operator} (see Corollary $7$ in \cite{Holgate} for instance). Let us describe it in the case of the reflection \eqref{adj} between the category $\mathsf{Qnd}$ of quandles and its full reflective subcategory $\mathsf{Qnd}^*$ of trivial quandles.

From now on, a subobject $M \xrightarrow{m} X$ in the category $\Qnd$ will be a subquandle inclusion. In particular the regular image $I$ of an arrow $f \colon X \to Y$ as in~\eqref{fact} will be the inclusion of the direct image $f(X)$ as a subquandle of $Y$.

\begin{lemma}\label{caracterisation}
	For a subobject $M \xrightarrow{m} X \in \Qnd$, \[c_X(M) = \{x \in X \ \vert\ x \in [a]_X \text{ for some } a \in M\}.\]
\end{lemma}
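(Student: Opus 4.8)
The plan is to unwind the construction of the pullback closure operator recalled in diagram \eqref{closurediagram}, applied to the pointed endofunctor $(U\pi_0,\eta)$ coming from the reflection \eqref{adj}, exploiting the fact that $\pi_0(M)$ and $\pi_0(X)$ are trivial quandles, i.e. plain sets, so that the $(\mathcal E,\mathcal M)$-factorization occurring in that diagram reduces to the ordinary image factorization of a function.

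First I would recall that $\Qnd$, being a variety, is a regular category in which regular epimorphisms are exactly the surjective homomorphisms, monomorphisms the injective ones, and pullbacks — hence inverse images of subobjects — are computed as in $\mathsf{Set}$. The unit component $\eta_X \colon X \to U\pi_0(X)$, $x \mapsto [x]_X$, is surjective, hence a regular epimorphism, so by Corollary $7$ of \cite{Holgate} the closure operator $c$ is precisely the pullback closure operator associated with $(U\pi_0,\eta)$. Next I would compute $\pi_0(m) \colon \pi_0(M) \to \pi_0(X)$ for the subquandle inclusion $m \colon M \hookrightarrow X$: it is the function $[a]_M \mapsto [a]_X$ sending a connected component of $M$ to the connected component of $X$ containing it. Both source and target being trivial quandles, its $(\mathcal E,\mathcal M)$-factorization in $\Qnd$ is just the image factorization $\pi_0(M) \twoheadrightarrow N \hookrightarrow \pi_0(X)$, with $N = \{[a]_X \mid a \in M\} \subseteq \pi_0(X)$ carrying its (necessarily trivial) subquandle structure; thus in the notation of \eqref{closurediagram} the mono $i$ is this subset inclusion.

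Finally, by the set-theoretic description of inverse images in $\Qnd$,
\[
c_X(M) = \eta_X^{-1}(i) = \{x \in X \mid [x]_X \in N\} = \{x \in X \mid x \in [a]_X \text{ for some } a \in M\},
\]
which is the announced formula. The only step requiring a word of justification — and the closest thing to an obstacle, though a mild one — is that this subset is genuinely a subquandle: this is automatic because it is the inverse image of a subquandle along a homomorphism, or, concretely, because it is a union of $\Inn(X)$-orbits and each $\rho_y$ and $\rho_y^{-1}$ maps every orbit onto itself. Everything else is a direct translation of the definitions, the essential simplification being that the reflective subcategory $\Qndt$ consists of sets, so that the factorization system and the reflector behave transparently.
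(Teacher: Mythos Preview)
Your argument is correct and follows essentially the same route as the paper: both proofs unwind the pullback closure operator for the pointed endofunctor $(U\pi_0,\eta)$, identify the mono $i$ in diagram~\eqref{closurediagram} with the inclusion of the image $\{[a]_X \mid a \in M\}$ in $\pi_0(X)$, and read off $c_X(M)$ as its inverse image along $\eta_X$. Your version simply spells out in more detail why the factorization and the pullback can be computed as in $\mathsf{Set}$ and why the resulting subset is a subquandle.
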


\begin{proof}
	The adjunction \eqref{adj} induces the pointed endofunctor $(U\pi_0, \eta)$, and the arrow $N \xrightarrow{n} RX= U\pi_0 X$ in the diagram \eqref{closurediagram} is simply the inclusion of the image of $U\pi_0 (m)$ in~$U\pi_0 (X)$. Since $c_X(m)$ is a monomorphism, it is the inclusion of the following subquandle of $X$:
	\[c_X(M) = \{x \in X \ \vert\ x \in [a]_X \text{ for some } a \in M\}\]
\end{proof}

The pullback closure operator takes a subquandle $M$ of $X$ and extends it to the union of connected components that are ``touched'' by $M$. For instance, one can represent the action of the closure operator on a subobject $M$ of $X$ (represented by the dark grey rectangle here below) by the light grey part $c_X(M)$ in the following diagram:

\[\begin{tikzpicture}[scale=0.3]
\node[above right] (X) at (10,5) {$X$};
\draw[] (-10,-5) rectangle (10,5);
\draw[fill=white!80!black] (-5,5) -- (-5,-5) -- (10,-5) -- (10,0) -- (4,2) -- (7,5) -- (-5,5);
\draw[fill=gray] (-3,-4) rectangle (2,-1) node[above right]{$M$};
\draw[] (-10,-1) -- (-5,5);
\draw[] (-5,5) -- (7,5);
\draw[] (-5,5) -- (-5,-5);
\draw[] (-5,1) -- (1,-5);
\draw[] (-1,-3) -- (7,5);
\draw[] (4,2) -- (10,0);
\end{tikzpicture}\]

\begin{remark}\label{regular=pullback}
	Given a reflective subcategory $\mathcal{A}$ of a regular category $\catC$ with reflector $I$ and unit $r$, one can also consider the classical closure operator for subobjects in $\catC$, called the 
	\emph{regular closure operator} \cite{S, DG, Clementino, CT}. This associates, with any 
	subobject $M \xrightarrow{m} X$ in $\catC$, the equalizer $(c_X^{reg}(M), c_X^{reg}(m))$ of~{$r_{X +_M X} \circ i$} and $r_{X +_M X} \circ j$, where $(i,\ j : X \to X +_M X)$ is the cokernel pair of $m$:
	
	\[\begin{tikzpicture}[scale=1.2]
	\node (M) at (-3,0) {$M$};
	\node (X) at (-1,0) {$X$};
	\node (S) at (1,0) {$X +_{M} X$};
	\node (R) at (4,0) {$UI(X +_M X)$.};
	\node (C) at (-2,-1) {$c_X^{reg} (M)$};
	\draw[dotted,->] (M) to (C);
	\draw[->] (C) to node[below right]{$c_X^{reg} (m)$} (X);
	\draw[->] (M) to node[above]{$m$} (X);
	\draw[transform canvas={yshift=0.5ex},->] (X) to node[above]{$i$} (S);
	\draw[transform canvas={yshift=-0.5ex},->] (X) to node[below]{$j$} (S);
	\draw[->] (S) to node[above]{$r_{X +_M X}$} (R);
	\end{tikzpicture}\]
	
	For the reflection \eqref{adj} between the category of quandles and the category of trivial quandles, one observes that any monomorphism in the subcategory $\Qndt$ is a \emph{regular monomorphism} (since $\Qndt \cong \mathsf{Set}$). As it is well known (and easy to check) the regular closure operator and the pullback closure operator induced by this reflection then coincide : $c_X = c_X^{reg},$ for every $X \in \Qnd$.
\end{remark}

\begin{lemma}\label{products}
	The functor $\pi_0 \colon \Qnd \rightarrow \Qndt$ preserves finite products.
\end{lemma}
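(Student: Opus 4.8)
The plan is to verify directly that the canonical comparison morphism
\[ \pi_0(X \times Y) \longrightarrow \pi_0(X) \times \pi_0(Y), \]
induced by $\pi_0$ applied to the two product projections, is an isomorphism in $\Qndt$, and that $\pi_0$ sends the one-element quandle (the terminal object of $\Qnd$) to the one-element set (the terminal object of $\Qndt$); since a functor preserves all finite products as soon as it preserves binary products and the terminal object, this is enough. The statement about the terminal object is immediate, as a singleton has exactly one connected component.

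For binary products I would use that products in the variety $\Qnd$ are computed as in $\mathsf{Set}$, with the operations defined coordinatewise: $(x,y) \lhd^{\alpha} (x',y') = (x \lhd^{\alpha} x', y \lhd^{\alpha} y')$ for $\lhd^{\alpha} \in \{\lhd, \lhd^{-1}\}$. Unwinding the definitions, the comparison map sends the connected component $[(x,y)]_{X \times Y}$ to the pair $([x]_X, [y]_Y)$. Surjectivity is clear, since $[(x,y)]_{X \times Y}$ is a preimage of $([x]_X, [y]_Y)$. Everything therefore comes down to injectivity: assuming $[x]_X = [x']_X$ and $[y]_Y = [y']_Y$, one must exhibit a chain connecting $(x,y)$ to $(x',y')$ inside $X \times Y$.

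The key point --- and essentially the only thing to check --- is that idempotency (A1) lets one move a single coordinate at a time. Given a chain $x \lhd^{\alpha_1} x_1 \lhd^{\alpha_2} \cdots \lhd^{\alpha_n} x_n = x'$ witnessing $[x]_X = [x']_X$, consider the chain $(x,y) \lhd^{\alpha_1} (x_1,y) \lhd^{\alpha_2} \cdots \lhd^{\alpha_n} (x_n,y)$ in $X \times Y$: by (A1) its second coordinate equals $y$ at every step, so it terminates at $(x',y)$. Then, starting from a chain $y \lhd^{\beta_1} y_1 \lhd^{\beta_2} \cdots \lhd^{\beta_m} y_m = y'$ in $Y$ and using the elements $(x',y_j)$, the same device moves $(x',y)$ to $(x',y')$ while keeping the first coordinate constant. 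Concatenating the two chains shows $[(x,y)]_{X \times Y} = [(x',y')]_{X \times Y}$, so the comparison map is a bijection; being a morphism of trivial quandles (= a map of sets) it is automatically an isomorphism in $\Qndt$.

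I do not anticipate any real obstacle: the only slightly delicate observation is that the coordinatewise action combined with idempotency licenses this ``one coordinate at a time'' manipulation, which is precisely what forces the $\Inn(X \times Y)$-orbit of a pair to be determined by the pair of orbits.
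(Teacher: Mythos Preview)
Your proposal is correct and follows essentially the same approach as the paper: both reduce to binary products, describe the comparison map explicitly, and prove injectivity via the same ``move one coordinate at a time using idempotency'' argument, concatenating a chain $(x_i,y)$ to reach $(x',y)$ and then a chain $(x',y_j)$ to reach $(x',y')$.
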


\begin{proof}
	It suffices to check that the functor $\pi_0$ preserve binary products, since it preserves the terminal object. Let $X,\ Y \in \Qnd$. There is a unique quandle homomorphism $\gamma \colon \pi_0(X \times Y) \to \pi_0(X) \times \pi_0(Y)$ such that $\gamma ([(x,y)]_{X\times Y}) = ([x]_{X},[y]_Y)$. It is easy to see that $\gamma$ is surjective, by using the fact that each component of the unit of the adjunction \eqref{adj} is surjective.
	
	Let us check that $\gamma$ is injective : let $[(x,y)]_{X\times Y}$ and $[(x',y')]_{X\times Y}$ be elements of $\pi_0({ X \times Y})$ such that $\gamma ([(x,y)]_{X\times Y}) = \gamma ([(x',y')]_{X\times Y})$. This means that { $[x]_{X}=[x']_{X}$ and $[y]_{Y}=[y']_{Y}$}. There are then $x_{i} \in X$, $y_j \in Y$ and $\lhd^{\alpha_{i}},\ \lhd^{\beta_j} \in \{\lhd, \lhd^{-1}\}$ for $1 \leq i \leq n$ and $1 \leq j \leq m$ such that $$x \lhd^{\alpha_{1}} x_{1} \dots \lhd^{\alpha_{n}} x_{n} = x'$$ and $$y \lhd^{\beta_{1}} y_{1} \dots \lhd^{\beta_{m}} y_{m} = y'.$$ The idea now is to use the idempotency of $\lhd$ and $\lhd^{-1}$ in order to pass from $x$ to $x'$, and then from $y$ to $y'$, without changing the other component~:
	{ \[
		(x,y) \lhd^{\alpha_{1}} (x_1,y) \lhd^{\alpha_2} \dots \lhd^{\alpha_{n}} (x_n,y) \lhd^{\beta_{1}} (x',y_1) \dots \lhd^{\beta_{m}} (x',y_{m})
		= (x',y').
		\]}
	This shows that $[(x,y)]_{X \times Y} = [(x',y')]_{X \times Y}$, proving that $\gamma$ is also injective, thus an isomorphism.
\end{proof}

\begin{lemma}\label{sup}
	Let $S \xrightarrow{s} X$ and $T \xrightarrow{t} X$ be two subquandles of $X \in \Qnd$. The smallest subquandle $S \vee T$ containing both $S$ and $T$ is given by the set \[U = \left\{a_1 \lhd^{\alpha_1} a_2 \lhd^{\alpha_2} \cdots \lhd^{\alpha_{n-1}} a_n \ \vert \ a_i \in S \cup T \text{ and } \lhd^{\alpha_i} \in \{\lhd, \lhd^{-1}\}\ \forall 1 \leq i \leq n-1 \right\}\] equipped with the quandle operation inherited from $X$.
\end{lemma}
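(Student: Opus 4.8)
The plan is to verify three things about the set $U$, equipped with the operations inherited from $X$: that $U$ contains $S\cup T$; that every subquandle of $X$ containing $S\cup T$ contains $U$; and that $U$ is itself a subquandle of $X$. Together these say precisely that $U$ is the least subquandle containing $S$ and $T$, that is, $U=S\vee T$. The first point is immediate, since every element of $S\cup T$ is a word of length $n=1$. The second is a routine induction on the length $n$ of a word $a_1\lhd^{\alpha_1}\cdots\lhd^{\alpha_{n-1}}a_n$: a subquandle containing $S\cup T$ contains all words of length $1$ and, being closed under $\lhd$ and $\lhd^{-1}$, contains each longer word as soon as it contains all shorter ones. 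The axioms (A1)--(A3) hold in $U$ because they hold in $X$, so the whole substance of the lemma is the closedness of $U$ under $\lhd$ and $\lhd^{-1}$.

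For this I would first record a reformulation of self-distributivity. Writing $\rho_y(x)=x\lhd y$ for the right translations (bijections, with $\rho_y^{-1}(x)=x\lhd^{-1}y$ by (A2)), axioms (A2)--(A3) give the conjugation formula $\rho_{a\lhd^{\gamma}b}=\rho_b^{\gamma}\rho_a\rho_b^{-\gamma}$ for $a,b\in X$ and $\gamma\in\{+1,-1\}$, with the conventions $\lhd^{+1}=\lhd$ and $\rho_b^{+1}=\rho_b$. Applying this to a single element yields the identity
\[
a\lhd^{\gamma}(b\lhd^{\epsilon}c)\;=\;\bigl((a\lhd^{-\epsilon}c)\lhd^{\gamma}b\bigr)\lhd^{\epsilon}c,
\]
valid for all $a,b,c\in X$ and all $\gamma,\epsilon\in\{+1,-1\}$.

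The core of the proof is then the following claim, proved by induction on $m$: \emph{for every $u\in U$, every word $v=b_1\lhd^{\beta_1}\cdots\lhd^{\beta_{m-1}}b_m$ with all $b_i\in S\cup T$, and every $\gamma\in\{+1,-1\}$, one has $u\lhd^{\gamma}v$ (that is, $\rho_v^{\gamma}(u)$) in $U$.} For $m=1$, $u\lhd^{\gamma}b_1$ is obtained from a word representing $u$ by appending the single letter $b_1\in S\cup T$, hence lies in $U$. For $m\geq2$, write $v=v'\lhd^{\beta_{m-1}}b_m$ with $v'=b_1\lhd^{\beta_1}\cdots\lhd^{\beta_{m-2}}b_{m-1}$, and let $\epsilon\in\{+1,-1\}$ be the sign of $\beta_{m-1}$; the displayed identity then gives
\[
u\lhd^{\gamma}v\;=\;\bigl((u\lhd^{-\epsilon}b_m)\lhd^{\gamma}v'\bigr)\lhd^{\epsilon}b_m .
\]
Here $u\lhd^{-\epsilon}b_m\in U$ (append a letter of $S\cup T$ to a word for $u$); then $(u\lhd^{-\epsilon}b_m)\lhd^{\gamma}v'\in U$ by the induction hypothesis applied to the shorter word $v'$; and appending $b_m$ once more keeps us in $U$. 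Thus $u\lhd^{\gamma}v\in U$. Letting $v$ range over words representing the elements of $U$ yields closedness of $U$ under both $\lhd$ ($\gamma=+1$) and $\lhd^{-1}$ ($\gamma=-1$), and the lemma follows.

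I do not anticipate a genuine obstacle here: the only thing needing care is the sign bookkeeping — the outer exponent $\gamma$, the last-letter exponent $\epsilon$, and the conjugating exponent $-\epsilon$ — but the uniform formula $\rho_{a\lhd^{\gamma}b}=\rho_b^{\gamma}\rho_a\rho_b^{-\gamma}$ absorbs all of the cases at once, so the induction runs mechanically; the preliminary reductions (containment of $S\cup T$ and minimality of $U$) are entirely routine.
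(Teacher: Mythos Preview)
Your proof is correct and follows essentially the same route as the paper: both establish the key identity $a\lhd^{\gamma}(b\lhd^{\epsilon}c)=((a\lhd^{-\epsilon}c)\lhd^{\gamma}b)\lhd^{\epsilon}c$ and use it to show that $U$ is closed under $\lhd$ and $\lhd^{-1}$, the containment and minimality being routine. The only cosmetic difference is that where you phrase the closure argument as an induction on the length of the right-hand word $v$, the paper unrolls this induction into a single explicit formula displaying $u\lhd^{\alpha}v$ as a left-associated word in the letters of $u$ and $v$.
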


\begin{proof}
	First note that in a quandle $X$ we have the following equality for all $x,\ y,\ z \in X$ and $\lhd^\alpha, \ \lhd^{\beta} \in \{\lhd, \lhd^{-1}\}$ :
	\begin{align*}
	x \lhd^\alpha \left(y \lhd^\beta z\right) &= \left(\left( x\lhd^{-\beta} z \right)\lhd^\beta z\right) \lhd^\alpha \left(y \lhd^\beta z\right) \\
	&= \left( \left(x \lhd^{-\beta} z\right) \lhd^\alpha y \right) \lhd^\beta z
	\end{align*}
	
	Given two elements $a_1 \lhd^{\alpha_1} a_2 \lhd^{\alpha_2} \cdots \lhd^{\alpha_{n-1}} a_n$ and $b_1 \lhd^{\beta_1} b_2 \lhd^{\beta_2} \cdots \lhd^{\beta_{m-1}} b_m$ of  $U$, the previous equality gives 
	\begin{align*}
	&\left( a_1 \lhd^{\alpha_1} a_2 \lhd^{\alpha_2} \cdots \lhd^{\alpha_{n-1}} a_n \right) \lhd^{\alpha} \left( b_1 \lhd^{\beta_1} b_2 \lhd^{\beta_2} \cdots \lhd^{\beta_{m-1}} b_m \right) = \\
	&a_1 \lhd^{\alpha_1} a_2 \lhd^{\alpha_2} \cdots \lhd^{\alpha_{n-1}} a_n \lhd^{-\beta_{m-1}} b_m \lhd^{-\beta_{m-2}} \cdots \lhd^{-\beta_1} b_2  \lhd^{\alpha} b_1 \lhd^{\beta_1} b_2 \lhd^{\beta_2} \cdots \lhd^{\beta_{m-1}} b_m
	\end{align*}
	showing that $U$ is stable under $\lhd$ and $\lhd^{-1}$.
	
	Certainly $U$ contains both $S$ and $T$, and any quandle containing $S$ and $T$ must contain all chains of the form $a_1 \lhd^{\alpha_1} a_2 \lhd^{\alpha_2} \cdots \lhd^{\alpha_{n-1}} a_n$ with $a_i \in S \cup T$ so that $U$ is the smallest subquandle containing $S$ ans $T$.
\end{proof}

Let us now show some basic properties of the pullback closure operator associated with \eqref{adj}:

\begin{proposition}\label{properties}
	The pullback closure operator $c$ for the adjunction between $\Qnd$ and $\Qndt$ has the following properties :
	\begin{itemize}
		\item[(1)] $c_X(\{x\}) = [x]_X$ for any $x \in X$;
		\item[(2)] $c_X(\bigvee_{i \in I}s_i) = \bigvee_{i \in I} c_X(s_i)$ for subobjects $S_i \xrightarrow{s_i} X$ with $i \in I$ ($c$ is fully additive);
		\item[(3)] $c_{X}(\prod_{1 \leq i \leq n} m_i) = \prod_{1 \leq i \leq n} c_{X_i} (m_i)$, where $X = \prod_{1 \leq i \leq n}X_i $ for any finite family of subobjects $M_i \xrightarrow{m_i} X_i$ with $1 \leq i \leq n$ ($c$ is finitely productive);
		\item[(4)] $f(c_X(m)) = c_Y(f(m))$ for any surjective homomorphism $f \colon X \to Y$ and subobject $M \xrightarrow{m} X$.
	\end{itemize}
\end{proposition}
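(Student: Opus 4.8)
The plan is to deduce all four statements from the explicit description of the closure given in Lemma~\ref{caracterisation}, namely that $c_X(M)$ is the union of those connected components (orbits under $\Inn(X)$) that meet $M$. I will use two elementary observations throughout. First, a union of connected components of a quandle $X$ is automatically a subquandle: if $x$ lies in a component $C$ and $z\in X$ is arbitrary, then $x\lhd z$ and $x\lhd^{-1}z$ again lie in $[x]_X=C$, so any union of components is closed under $\lhd$ and $\lhd^{-1}$ (idempotency and right invertibility being inherited from $X$). Consequently, for subobjects $S_i\xrightarrow{s_i}X$ the join $\bigvee_{i}c_X(s_i)$ is just the set-theoretic union $\bigcup_i c_X(S_i)$. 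Second, by Lemma~\ref{sup} — together with the obvious extension of its proof to arbitrary families — the subquandle $\bigvee_{i\in I}S_i$ consists precisely of the left-associated chains $a_1\lhd^{\alpha_1}a_2\lhd^{\alpha_2}\cdots\lhd^{\alpha_{n-1}}a_n$ with every $a_k$ in $\bigcup_{i}S_i$, and such a chain lies in the component $[a_1]_X$.

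Statement~(1) is immediate from Lemma~\ref{caracterisation}, since the only component meeting $\{x\}$ is $[x]_X$. For~(2), the inequality $\bigvee_i c_X(s_i)\le c_X\bigl(\bigvee_i s_i\bigr)$ holds by monotonicity of $c$. For the reverse, let $x\in c_X\bigl(\bigvee_i s_i\bigr)$; then $x$ lies in the component of some element $a$ of $\bigvee_iS_i$, and writing $a=a_1\lhd^{\alpha_1}\cdots\lhd^{\alpha_{n-1}}a_n$ with $a_1\in S_{i_0}$ we get $x\in[a]_X=[a_1]_X\subseteq c_X(S_{i_0})$, whence $x\in\bigcup_i c_X(S_i)=\bigvee_i c_X(s_i)$.

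For~(3), Lemma~\ref{products} supplies the isomorphism $\gamma\colon\pi_0(X)\xrightarrow{\ \sim\ }\prod_{1\le i\le n}\pi_0(X_i)$, $[(x_1,\dots,x_n)]_X\mapsto([x_1]_{X_1},\dots,[x_n]_{X_n})$, from which the connected component of $(x_1,\dots,x_n)$ in $X=\prod_iX_i$ is exactly $[x_1]_{X_1}\times\cdots\times[x_n]_{X_n}$. A point $(x_1,\dots,x_n)$ belongs to $\prod_ic_{X_i}(M_i)$ iff each $[x_i]_{X_i}$ meets $M_i$ iff the component $[(x_1,\dots,x_n)]_X$ meets $\prod_iM_i$; taking the union of all such components identifies $\prod_ic_{X_i}(M_i)$ with $c_X\bigl(\prod_im_i\bigr)$ via Lemma~\ref{caracterisation}. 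For~(4), the inclusion $f(c_X(m))\le c_Y(f(m))$ is part of the definition of a closure operator. For the reverse I will first show that a surjective homomorphism $f\colon X\to Y$ satisfies $f([a]_X)=[f(a)]_Y$: the inclusion $\subseteq$ holds because $f$ preserves chains, while given $y=f(a)\lhd^{\gamma_1}b_1\lhd^{\gamma_2}\cdots\lhd^{\gamma_k}b_k$ in $[f(a)]_Y$, choosing preimages $c_j\in f^{-1}(b_j)$ and applying $f$ to $a\lhd^{\gamma_1}c_1\lhd^{\gamma_2}\cdots\lhd^{\gamma_k}c_k\in[a]_X$ recovers $y$. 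Then, for $y\in c_Y(f(M))$, say $y\in[b]_Y$ with $b=f(a)$ and $a\in M$, we have $y\in[f(a)]_Y=f([a]_X)$, so $y=f(a')$ with $a'\in[a]_X\subseteq c_X(M)$ and therefore $y\in f(c_X(M))$.

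Once Lemma~\ref{caracterisation} is in hand there is no serious obstacle; the points deserving the most care are the two structural observations of the first paragraph — especially that a union of connected components is a subquandle, which is what lets the joins in~(2) and~(3) be computed as plain unions — and, in~(4), the lifting of a chain along a surjective homomorphism needed to establish $f([a]_X)=[f(a)]_Y$.
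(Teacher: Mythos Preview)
Your proof is correct and follows essentially the same route as the paper's: both deduce all four points from Lemma~\ref{caracterisation}, use Lemma~\ref{sup} to reduce the join in~(2) to components of first terms of chains, invoke Lemma~\ref{products} for~(3), and lift chains along the surjection for~(4). Your explicit observation that a union of connected components is automatically a subquandle---and hence that $\bigvee_i c_X(s_i)$ is just the set-theoretic union---is a point the paper leaves implicit in its third displayed equality for~(2), so if anything your version is slightly more careful there.
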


\begin{proof}
	The first point is easily verified.
	To check $(2)$, observe that Lemma~\ref{sup} says that $$\bigvee_{i \in I}S_i = \{a_1 \lhd^{\alpha_1} a_2 \lhd^{\alpha_2} \dots \lhd^{\alpha_{n-1}} a_n \in X \ \vert \ a_j \in \bigcup_{i \in I} S_i \text{ for all } 1\leq j \leq n\},$$ where the quandle structure is inherited from $X$. Accordingly: \begin{align*}c_X(\bigvee_{i \in I}S_i) &= \{x \in X\ \vert \ x \in [a]_X \text{ with } a \in \bigvee_{i \in I}S_i  \} \\
	&= \{x \in X\ \vert \ x \in [a]_X \text{ with } a \in \bigcup_{i \in I}S_i \} \\
	&= \bigvee_{i \in I} \{ x \in X\ \vert \ x \in [a]_X \text{ with } a \in S_i\}\\
	&= \bigvee_{i \in I} c_X(S_i).
	\end{align*}
	
	In order to check $(3)$, let us write $(a_i)$ for an element of $\prod_{1 \leq i \leq n} M_i = M$ and $(x_i)$ for an element of $\prod_{1 \leq i \leq n} X_i = X$. Then 
	\begin{align*}
	c_X (M) &= \{(x_i) \in X \ \vert\ (x_i) \in [(a_i)]_X \text{ for some } (a_i) \in M\} \\
	&= \{(x_i) \in X \ \vert\ x_i \in [a_i]_{X_i} \text{ for some } a_i \in M_i \text{ for all } 1 \leq i \leq n\} \text{ (by Lemma~\ref{products})} \\
	&= \prod_{1 \leq i \leq n} \{x_i \in X_i \ \vert\ x_i \in [a_i]_{X_i} \text{ for some } a_i \in M_i\} \\
	&= \prod_{1 \leq i \leq n} c_{X_i} (M_i).
	\end{align*} 
	
	To see $(4)$, we have to check the validity of $c_Y(f(m)) \subset f(c_X(m)) $ for a surjective quandle homomorphism $f \colon X \to Y$. If $f(x) \in  c_Y(f(m))$ then there exist $y_i \in Y$ and $\lhd^{\alpha_i} \in \{\lhd, \lhd^{-1}\}$ for $1\leq i \leq n$ such that for some $a \in M$ $$f(x) = f(a) \lhd^{\alpha_1} y_1 \dots \lhd^{\alpha_n} y_n.$$ But since $f$ is surjective, there exists $x_i \in X$ such that $f(x_i) = y_i$ for all $1 \leq i \leq n$ so that {\begin{align*}
		f(x) &= f(a) \lhd^{\alpha_1} y_1 \lhd^{\alpha_2}\dots \lhd^{\alpha_n} y_n \\
		&= f(a) \lhd^{\alpha_1} f(x_1) \lhd^{\alpha_2}\dots \lhd^{\alpha_n} f(x_n) \\
		&= f(a \lhd^{\alpha_1} x_1 \lhd^{\alpha_2}\dots \lhd^{\alpha_n} x_n) \\
		&\in f(c_X(m)).
		\end{align*}}
\end{proof}

\begin{remark} We now show that the pullback closure operator associated with the reflection \eqref{adj} is not weakly hereditary. Consider the $3$-element quandle $X$ with $\lhd = \lhd^{-1}$, and $\lhd$ defined by the following table :
	\begin{equation}\label{monster}
	\begin{tabular}{|c|c|c|c|}
	\hline
	$\lhd$ & $x$ & $y$ & $z$ \\
	\hline
	$x$ & $x \lhd x = x$ & $x \lhd y = x$ & $x \lhd z = y$ \\
	\hline
	$y$ & $y \lhd x = y$ & $y \lhd y = y$ & $y \lhd z = x$ \\
	\hline
	$z$ & $z \lhd x = z$ & $z \lhd y = z$ & $z \lhd z = z$ \\
	\hline
	\end{tabular}
	\end{equation}  Now, if we look at the closure of the subobject $\{x\} \xrightarrow{m} X$, where $\{x \}$ is a one-element quandle, we find that $c_X (\{x\}) = \{x, y\}$ equipped with trivial quandle operations. Thus $m/c_X(m)$ is defined as $m/c_X(m) (x) = x$, and $c_{c_X(\{x\})}(\{x \}) = c_{\{x, y\}}(\{x \}) =\{x\}$, which is not isomorphic to $c_X (\{x\})$.
\end{remark}

\subsection{Connected quandles}
In a category $\catC$ equipped with a closure operator $c$ one says that {an object}~$X$ is \emph{$c$-connected} if the diagonal $\Delta_X \colon X \to X \times X$ is dense. 
In the category $\Qnd$ a quandle $A$ is called \emph{algebraically connected} if $A$ has exactly one orbit \cite{Joyce}. We now show that the $c$-connected quandles for the pullback closure operator associated with \eqref{adj} are precisely the algebraically connected quandles:
\begin{proposition}\label{connected}
	Let $c$ be the pullback closure operator for the adjunction \eqref{adj}. A quandle $X$ is $c$-connected if and only if it is algebraically connected.
\end{proposition}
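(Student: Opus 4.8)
The plan is to unwind the definition of $c$-connectedness directly, using the explicit description of the closure operator obtained in Lemma~\ref{caracterisation}. By definition, $X$ is $c$-connected precisely when the diagonal $\Delta_X \colon X \to X \times X$ is dense, i.e. when $c_{X \times X}(\Delta_X(X)) = X \times X$; here, since $\Delta_X$ is already a monomorphism, the relevant subquandle is $\Delta_X(X) = \{(a,a) \mid a \in X\} \subseteq X \times X$. Applying Lemma~\ref{caracterisation} to this subobject gives
\[
c_{X \times X}(\Delta_X(X)) = \{(u,v) \in X \times X \mid (u,v) \in [(a,a)]_{X \times X} \text{ for some } a \in X\}.
\]

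Next I would identify the orbits of $X \times X$ in terms of those of $X$. By Lemma~\ref{products} the canonical comparison $\gamma \colon \pi_0(X \times X) \to \pi_0(X) \times \pi_0(X)$, determined by $\gamma([(x,y)]_{X \times X}) = ([x]_X,[y]_X)$, is an isomorphism; in particular $\gamma$ is injective, so $(u,v)$ and $(a,a)$ lie in the same orbit of $X \times X$ if and only if $[u]_X = [a]_X$ and $[v]_X = [a]_X$, that is, if and only if $[u]_X = [v]_X$. Substituting this equivalence into the formula above yields
\[
c_{X \times X}(\Delta_X(X)) = \{(u,v) \in X \times X \mid [u]_X = [v]_X\},
\]
which is exactly the kernel pair of the unit $\eta_X \colon X \to U\pi_0(X)$.

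Finally I would conclude: this subquandle equals all of $X \times X$ if and only if $[u]_X = [v]_X$ for every pair $u,v \in X$, i.e. if and only if $X$ has a single orbit, which is precisely the definition of algebraic connectedness. This handles both implications simultaneously. I do not expect a genuine obstacle here; the only point requiring a little care is the passage through Lemma~\ref{products} in order to compute the orbit $[(a,a)]_{X \times X}$, but this follows at once from the explicit description of $\gamma$ together with its injectivity.
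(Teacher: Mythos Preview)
Your argument is correct. Both your proof and the paper's ultimately rest on Lemma~\ref{products}, but the routes differ. The paper argues diagrammatically from the defining pullback square of the closure: in one direction it uses that $\pi_0(X)=\{\star\}$ forces $\pi_0(X\times X)=\{\star\}$, making the image arrow $i$ (and hence $c_{X\times X}(\Delta_X)$) an isomorphism; in the other it pushes the isomorphism $c_{X\times X}(\Delta_X)$ down along the regular epimorphism $\eta_{X\times X}$ to conclude that $\pi_0(\Delta_X)$ is surjective. You instead invoke the explicit description of Lemma~\ref{caracterisation} and compute $c_{X\times X}(\Delta_X(X))$ directly as $\{(u,v)\mid [u]_X=[v]_X\}$, the kernel pair of $\eta_X$, handling both implications at once. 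Your approach is a bit more concrete and yields the pleasant by-product that the closure of the diagonal is precisely $\mathsf{Eq}(\eta_X)$; the paper's approach is more categorical and would transfer verbatim to any setting where the analogues of the two lemmas hold.
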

\begin{proof}
	When $X$ is algebraically connected, so that $\pi_0(X) = \{\star\}$ is the one-element quandle, then $X \times X$ is also algebraically connected, by Lemma \ref{products}. 
	We then have the commutative diagram 
	\[\begin{tikzpicture}[scale=0.35]
	\node[] (M) at (-8,7) {$X$};
	\node[] (X) at (8,7) {$X \times X$};
	\node[] (N) at (0,-5) {$N$};
	\node[] (N') at (0,4) {$c_{X \times X}(X)$};
	\node[] (P) at (0,-2) {};
	\node[] (M') at (-8,-2) {$U\pi_0(X)$};
	\node[] (X') at (8,-2) {$U\pi_0(X \times X)$};
	\draw[->] (M) to node[above]{$\Delta_X$} (X);   
	\draw[->] (M) to node[left]{$\eta_X$} (M');
	\draw[->] (X) to node[right]{$\eta_{X \times X}$} (X');
	\draw[->] (M') -- (P) to node[above]{$U\pi_0(\Delta_X)$} (X');    
	\draw[->] (M') to node[below,rotate=-20]{$e$} (N);   
	\draw[->] (N) to node[below,rotate=20]{$i$} (X');
	\draw[dotted,->] (M) to (N');
	\draw[->] (N') to node[below,rotate=20]{$c_{X \times X}(\Delta_X)$} (X);
	\draw[->] (N') to (N);  
	\end{tikzpicture}\]
	where $i$ is an isomorphism. Accordingly, $c_{X \times X}(\Delta_X)$ is an isomorphism, and $X$ is $c$-connected.
	
	Conversely, assume now that $X$ is $c$-connected. Since $c_{X \times X}(\Delta_X)$ is an isomorphism and $\eta_{X \times X}$ is a regular epimorphism, the monomorphism $i$ is an isomorphism. It follows that $U\pi_0(\Delta_X)$, and then $\pi_0(\Delta_X)$, is surjective: this means that for any $([x],[y]) \in \pi_0(X \times X)= \pi_0(X) \times \pi_0 (X)$, there exists a $[z] \in \pi_0(X)$ such that $[x]=[z]=[y]$, showing that $X$ is algebraically connected.
\end{proof}
From now on we shall call a quandle \emph{connected} when it satisfies the equivalent conditions in Proposition \ref{connected}.

A similar result holds for the so-called \emph{$c$-separated objects}: these turn out to be exactly the so-called trivial quandles. Recall that an object is said to be $c$-separated for a closure operator $c$ when $\Delta_X \colon X \to X \times X$ is closed.
\begin{proposition}\label{separated}
	Let $c$ be the pullback closure operator for the adjunction \eqref{adj}. A quandle $X$ is $c$-separated if and only if it is a trivial quandle.
\end{proposition}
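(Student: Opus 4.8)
The plan is to compute the closure $c_{X\times X}(\Delta_X)$ explicitly and then recognise exactly when it coincides with $\Delta_X$. First I would apply Lemma \ref{caracterisation} to the subobject $\Delta_X \colon X \to X\times X$: a pair $(a,b)$ lies in $c_{X\times X}(\Delta_X)$ precisely when $(a,b)\in[(x,x)]_{X\times X}$ for some $x\in X$. By Lemma \ref{products}, the canonical map $\pi_0(X\times X)\to\pi_0(X)\times\pi_0(X)$ is an isomorphism, so $(a,b)\in[(x,x)]_{X\times X}$ is equivalent to $[a]_X=[x]_X$ and $[b]_X=[x]_X$, hence to $[a]_X=[b]_X$. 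Therefore
\[
c_{X\times X}(\Delta_X)=\{(a,b)\in X\times X \ \vert\ [a]_X=[b]_X\},
\]
the ``same orbit'' relation on $X$ (which is indeed a subquandle of $X\times X$, since $a\lhd c$ stays in $[a]_X$ for all $c$).

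Next I would observe that $\Delta_X$ is closed, i.e.\ $c_{X\times X}(\Delta_X)=\Delta_X$, if and only if this relation is contained in the diagonal, i.e.\ $[a]_X=[b]_X$ forces $a=b$ for all $a,b\in X$; equivalently, every orbit $[x]_X$ is a singleton. Finally I would translate ``every orbit is a singleton'' into ``$X$ is a trivial quandle''. If every orbit is a singleton, then for all $x,y\in X$ the element $x\lhd y$ lies in $[x]_X=\{x\}$, so $x\lhd y=x$, and likewise $x\lhd^{-1}y=x$; hence $X$ is trivial. Conversely, if $X$ is trivial then every chain $x\lhd^{\alpha_1}x_1\lhd^{\alpha_2}\cdots\lhd^{\alpha_n}x_n$ collapses to $x$, so $[x]_X=\{x\}$ for every $x$. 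Chaining the three equivalences gives the statement.

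The only step needing a moment's care is the identification of $[(x,x)]_{X\times X}$ with the pairs $(a,b)$ satisfying $[a]_X=[b]_X$, but this is an immediate consequence of Lemma \ref{products}; everything else is a straightforward unwinding of the definition of closure and of ``trivial quandle'', so I do not expect a real obstacle here.
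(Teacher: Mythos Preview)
Your proof is correct and is essentially the same argument as the paper's, carried out at the level of elements rather than in categorical language: both identify $c_{X\times X}(\Delta_X)$ with the ``same orbit'' relation $\{(a,b)\mid [a]_X=[b]_X\}=\mathsf{Eq}(\eta_X)$ (you via Lemma~\ref{caracterisation} and Lemma~\ref{products}, the paper via the observation that the naturality square for $\Delta_X$ is the defining pullback for the closure), and then conclude that this coincides with $\Delta_X$ precisely when $\eta_X$ is injective, i.e.\ when $X$ is trivial. The paper compresses your last two steps into the remark that $\eta_X$ is always a regular epimorphism, so it is a monomorphism if and only if it is an isomorphism.
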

\begin{proof}
	By taking into account Lemma \ref{products} we see that a quandle $X$ is $c$-separated if and only if the commutative square 
	\[\begin{tikzpicture}[scale=0.35]
	\node[] (M) at (-4,4) {$X$};
	\node[] (X) at (4,4) {$X \times X$};
	\node[] (M') at (-4,-4) {$U\pi_0(X)$};
	\node[] (X') at (4,-4) {$U\pi_0(X \times X)$};
	\draw[->] (M) to node[above]{$\Delta_X$} (X);   
	\draw[->] (M) to node[left]{$\eta_X$} (M');
	\draw[->] (X) to node[right]{$\eta_{X \times X}$} (X');
	\draw[->] (M') to node[below]{$\Delta_{U\pi_0(X)}$} (X');    
	\end{tikzpicture}\]
	is a pullback.
	Since this square is a pullback if and only if the kernel pair $\Eq(\eta_X)$ of the unit $\eta_X$ is the discrete equivalence relation on $X$, this is also equivalent to the fact that $\eta_X$ is a monomorphism. But $\eta_X$ is always a regular epimorphism, so that $X$ is $c$-separated if and only if $\eta_X$ is an isomorphism, as desired.
	%If a quandle $X$ is trivial, it is easy to see that  is closed. Conversely, when the diagonal is closed, 
	%the canonical map $\Delta_X/c_{X \times X}(\Delta_X)$ sending $x \in X$ to $(x,x) \in c_{X \times X} (X)$ is bijective. Given two elements %$y$ and $z$ in $X$ such that $[y]_X=[z]_X$, so that $(y,z)\in c_{X \times X} (X)$, the assumption says that there is a unique $x \in X$ such %that $(\Delta_X/c_{X \times X}(\Delta_X))(x)=(x,x) =(y,z)$, thus $y=z$.
\end{proof}
We are now going to show that the connected quandles form a \emph{connectedness} in the sense of Preuss, Herrlich, Arhangel'ski\v{\i} and Wiegandt \cite{AW} with respect to the class of trivial quandles. We also give a description of the disconnectedness associated with the class of connected quandles. 
We will follow~\cite{DT2} and define a morphism~$f \colon X \to Y$ to be \emph{constant} if $!_X \colon X \to 1$ (where $1$ is the terminal object) is a strong epimorphism and a factor of $f$. In the category $\Qnd$ of quandles, this means that $f \colon X \to Y$ is constant if and only if it factors through the one-element quandle $\{ \star \}$.

For a full subcategory $\mathcal{X}$ of $\catC$, the class \[r(\mathcal{X}) \coloneqq \{C \in \catC \ \vert\ \text{every } f\colon  X \rightarrow C \text{ is constant for all } X\in \mathcal{X}\}\]
is called a \emph{disconnectedness}, and \[l(\mathcal{X}) \coloneqq \{C \in \catC \ \vert\ \text{every } f\colon C \rightarrow X \text{ is constant for all } X\in \mathcal{X}\}\] is called a \emph{connectedness}.
There is an adjunction, with $ \mathcal{S}ub(\catC)$ the class of all full subcategories of $\catC$ ordered by inclusion:
\begin{equation} 
\vcenter{\hbox{\begin{tikzpicture}[scale=0.4]\label{adjsub}
		\node[] (X) at (-5,0) { $\mathcal{S}ub(\catC)$};
		\node[] (Y) at (5,0) {$\mathcal{S}ub(\catC)^{op}$};
		\node[line width=4pt] (C) at (0,0) {$\perp$};
		\draw[->,transform canvas={yshift=1.3ex}] (X) to node[above]{$r$} (Y);
		\draw[<-,transform canvas={yshift=-1.3ex}] (X) to node[below]{$l$} (Y);
		\end{tikzpicture}}}
\end{equation}

In the category $\mathsf{Top}$ of topological spaces, we have \begin{align*}
\mathcal{Y} & \coloneqq \{ \text{connected spaces}\} = l(r(\mathcal{Y})) \\
\mathcal{Z} & \coloneqq \{\text{hereditarily disconnected spaces}\} = r(\mathcal{Y})
\end{align*}
We are going to show that there is a similar correspondence in the category $\Qnd$ of quandles. In the following proposition by \emph{trivial subquandle} we shall mean the empty subquandle, and any one-element subquandle of a given quandle.
\begin{theorem}
	In the category $\Qnd$, given $\mathcal{X} = \Qndt $ we have
	\[\mathcal{Y} \coloneqq \{\text{connected quandles}\} = l(\mathcal{X}) = l(r(\mathcal{Y}))  \]
	and
	\[\mathcal{Z} \coloneqq\{X \in \Qnd\ \vert\ X \text{ has no non-trivial connected subquandles}\} = r(\mathcal{Y}). \]
\end{theorem}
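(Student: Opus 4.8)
The plan is to isolate the one substantive computation, $\mathcal{Y}=l(\mathcal{X})$, and then derive the remaining two equalities by essentially formal arguments together with a single observation: algebraic connectedness is inherited by images. Throughout I would use the following description of \emph{constant} morphisms in $\Qnd$: a homomorphism $f\colon X\to Y$ is constant if and only if $X\neq\emptyset$ and its image $f(X)$ is a one-element subquandle of $Y$, since $!_X\colon X\to 1$ is a (regular, hence strong) epimorphism precisely when $X$ is nonempty.

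\emph{Step 1: $\mathcal{Y}=l(\mathcal{X})$.} If $X$ is connected then $\pi_0(X)\cong 1$, so by the adjunction \eqref{adj} every homomorphism $f\colon X\to T$ with $T\in\Qndt$ factors through $\eta_X\colon X\to\pi_0(X)=1$; as $X$ is nonempty this makes $f$ constant, so $X\in l(\mathcal{X})$. Conversely, if $X$ is not connected then either $X=\emptyset$, in which case $!_\emptyset\colon\emptyset\to 1$ is not an epimorphism and the unique morphism $\emptyset\to 1$ is not constant; or $\pi_0(X)$ has at least two elements, in which case $\eta_X\colon X\to U\pi_0(X)$ is a homomorphism to a trivial quandle whose image is not a singleton, hence not constant. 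Either way $X\notin l(\mathcal{X})$.

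\emph{Step 2: $\mathcal{Z}=r(\mathcal{Y})$.} The preliminary observation is that for connected $Y$ and any homomorphism $f\colon Y\to C$ the image $f(Y)$ is connected: factoring $f=m\circ e$ with $e\colon Y\to f(Y)$ surjective, the left adjoint $\pi_0$ sends $e$ to a surjection, and since $\pi_0(Y)\cong 1$ and $f(Y)\neq\emptyset$ this gives $\pi_0(f(Y))\cong 1$. Granting this: if $C\in r(\mathcal{Y})$ and $S\hookrightarrow C$ is a connected subquandle, then its inclusion is a constant monomorphism, which forces $|S|\le 1$, so $S$ is trivial and $C\in\mathcal{Z}$. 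Conversely, if $C\in\mathcal{Z}$ and $f\colon Y\to C$ with $Y$ connected, then $f(Y)$ is a connected subquandle of $C$, hence trivial and (being the image of a nonempty quandle) a singleton, so $f$ is constant; thus $C\in r(\mathcal{Y})$.

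\emph{Step 3: $\mathcal{Y}=l(r(\mathcal{Y}))$.} Since $(r,l)$ in \eqref{adjsub} is a Galois connection, $\mathcal{Y}\subseteq l(r(\mathcal{Y}))$ holds automatically. For the reverse inclusion, the forward direction of Step 1 shows $\mathcal{X}\subseteq r(\mathcal{Y})$, whence applying the order-reversing operator $l$ yields $l(r(\mathcal{Y}))\subseteq l(\mathcal{X})=\mathcal{Y}$. I do not expect a genuine obstacle here; the only points that need care are the empty-quandle edge cases in Step 1 (where constancy fails for lack of a strong epimorphism onto $1$) and the stability of connectedness under quotients used in Step 2, which is exactly what makes the description of $\mathcal{Z}$ correct.
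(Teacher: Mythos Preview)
Your proof is correct and follows essentially the same route as the paper: both establish $\mathcal{Y}=l(\mathcal{X})$ via the adjunction unit $\eta_X$, both prove $\mathcal{Z}=r(\mathcal{Y})$ using the image factorization together with the fact that quotients of connected quandles are connected, and both derive $\mathcal{Y}=l(r(\mathcal{Y}))$ from the Galois connection (the paper simply invokes $lrl=l$, while you argue $\mathcal{X}\subseteq r(\mathcal{Y})$ and apply $l$, which amounts to the same thing). Your treatment is slightly more careful than the paper's in two places: you handle the empty quandle explicitly in Step~1, and you spell out why images of connected quandles are connected before using it in Step~2.
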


\begin{proof}
	\begin{enumerate}
		\item $\mathcal{Y} = l(\mathcal{X})$
		
		If $X$ is connected, then any $f \colon X \rightarrow Y$ with $Y \in \Qndt$ is constant by commutativity of the following square 
		\[\begin{tikzpicture}[scale=0.27]
		\node[] (M) at (-8,7) {$X$};
		\node[] (X) at (8,7) {$Y$};
		\node[] (M') at (-8,-2) {$U\pi_0(X) = \{\star\}$};
		\node[] (X') at (8,-2) {$Y=U\pi_0(Y)$};
		\draw[->] (M) to node[above]{$f$} (X);   
		\draw[->] (M) to node[left]{$\eta_X$} (M');
		\draw[double distance = 1.5pt] (X) to node[right]{$\eta_Y$} (X');
		\draw[->] (M') to node[below]{$\pi_0(f)$} (X');    
		\end{tikzpicture}\]
		
		To see that $l(\mathcal{X}) \subset \mathcal{Y}$, suppose that every $f \colon X \rightarrow Y$ is constant for \linebreak all {$Y \in \mathcal{X}$}, and let us prove that $\pi_0(X) = \{\star\}$. Note that {$\eta_X \colon X \to ~U~\pi_0(X)$} is constant, but it is a regular epimorphism, thus~$\pi_0(X) = \{\star\}$.
		%Consider the natural diagram 
		%\[\xymatrix@=1pc{X  \ar[rr]^f \ar[dd]_{\eta_X} \ar[dr]& & Y \ar@{=}[dd] \\
		%& \{\star\} \ar[ur]& \\
		%\pi_0(X) \ar[rr]_{\pi_0(f)} & & Y}\]
		%Since $\eta_X$ is a strong epimorphism and $\{\star\} \rightarrow Y$ is a monomorphism, there is an induced arrow $\alpha \colon \pi_0(X) \rightarrow \{\star\}$
		%\[\xymatrix@=1pc{X  \ar[rr]^f \ar[dd]_{\eta_X} \ar[dr]& & Y \ar@{=}[dd] \\
		%& \{\star\} \ar[ur]& \\
		%\pi_0(X) \ar[ur]^{\alpha} \ar[rr]_{\pi_0(f)} & & Y}\]
		%But $X \rightarrow \pi_0(X)$ is also constant (since $\pi_0(X)$ is trivial), thus
		%\[\xymatrix@=1pc{X  \ar[rr]^f \ar[dd]_{\eta_X} \ar[dr]& & Y \ar@{=}[dd] \\
		%& \{\star\} \ar[ur] \ar@<2pt>[dl]^{\beta} \ar@{<-}@<-2pt>[dl]_{\alpha} & \\
		%\pi_0(X)  \ar[rr]_{\pi_0(f)} & & Y}\]
		%Then $\pi_0(X)$ is isomorphic to $\{\star\}$, so $X$ is a connected quandle.
		
		\item $\mathcal{Z} = r(\mathcal{Y})$
		
		First take $X \in \mathcal{Z}$, $Y$ a connected quandle, and $f \colon Y \to X$. By taking the regular epimorphism-monomorphism factorization $i \circ p$ of $f$
		\[\begin{tikzpicture}[scale=0.7]
		\node[] (X) at (-2,0) {$Y$};
		\node[] (Y) at (2,0) {$X$};
		\node[] (I) at (0,-2) {$f(Y)$};
		\draw[->] (X) to node[above]{$f$} (Y);
		\draw[->] (X) to node[below left]{$p$} (I);
		\draw[->] (I) to node[below right]{$i$} (Y);
		\end{tikzpicture}\]
		observe that $f(Y)$ is connected as a quotient of a connected quandles, but it is also a subquandle of $X$ so it must be trivial, thus $f(Y) = \{ \star \}$. 
		
		Now suppose $X \in r(\{\text{connected quandles}\})$, and that $$X \notin \mathcal{Z} = \{X \in \Qnd\ \vert\ X \text{ has no non-trivial connected subquandles}\}.$$ Then $X$ has a non-trivial connected subquandle $\Gamma$ of cardinality strictly greater than $2$ (the only $2$-element quandle is trivial), with inclusion~{$\Gamma~\to~X$.} But $X \in r(\{\text{connected quandles}\})$ so the inclusion $\Gamma \to X$ factors through~$\{ \star \}$, thus $\Gamma = \{ \star \}$, a contradiction.
		
		\item $\mathcal{Y} = l(\mathcal{Z})$
		
		This follows from the adjunction~\eqref{adjsub}.
		%We only need to prove that $l(\mathcal{Z})  \subset \mathcal{Y}$.
		%If $X$ is not connected, this means that $X$ has at least two connected components, one of which will be written $X_1$. We define a quandle homomorphism $f \colon X \to Y$, where $Y = \{y,z\}$ is the trivial quandle of cardinality $2$, by setting $f(X_1) = y$ and $f(X \setminus X_1) = z$. This quandle homomorphism is not constant and $Y \in \mathcal{Z}$ so $X \notin l(\mathcal{Z})$. 
	\end{enumerate}
	
\end{proof}

\begin{remark}
	One might wonder whether $\mathcal{Z} = \Qndt$. Certainly the class of trivial quandles is contained in $\mathcal{Z}$ but the converse is not true because the larger class of quasi-trivial quandles is also contained in $\mathcal{Z}$. A \emph{quasi-trivial} quandle~\cite{Inoue} is a quandle satisfying { $x \lhd (x \lhd^{\alpha_1} x_1 \lhd^{\alpha_2} \cdots \lhd^{\alpha_n} x_n) = x$} for all $x,\ x_i \in X$ and~$\lhd^{\alpha_i} \in \{\lhd, \lhd^{-1}\}$ with $1 \leq i \leq n$. 
	
	To prove that any quasi-trivial quandle $X$ is in $\mathcal{Z}$, let $i \colon Y \to X$ be an inclusion of a connected subquandle $Y$ of $X$.
	Then $Y$ is also a quasi-trivial quandle since $i$ is injective and {$$i(x \lhd (x \lhd^{\alpha_1} x_1 \lhd^{\alpha_2}\cdots \lhd^{\alpha_n} x_n)) =   i(x) \lhd (i(x) \lhd^{\alpha_1} i(x_1) \lhd^{\alpha_2}\cdots \lhd^{\alpha_n} i(x_n)) = i(x).$$}
	Since $Y$ is connected, for any $x$, $y$ in $Y$ there exist $x_i \in Y$ and $\lhd^{\alpha_i} \in \{\lhd, \lhd^{-1}\}$  for $1 \leq i \leq n$  such that { $x \lhd^{\alpha_1} x_1 \lhd^{\alpha_2}\cdots \lhd^{\alpha_n} x_n =y$.} The fact that $Y$ is quasi-trivial gives { $$x \lhd y = x \lhd (x \lhd^{\alpha_1} x_1\lhd^{\alpha_2} \cdots \lhd^{\alpha_n} x_n) = x.$$}
	The quandle $Y$ is then trivial, and then it belongs to $\mathcal{Z}$.
	
	Note that a quasi-trivial quandle is not trivial in general: an example is provided by the quandle~\eqref{monster}.
\end{remark}

\section{Effective closure operators}

In this section we recall some results about closure operators on \emph{effective equivalence relations} in regular categories, which can be found in~\cite{BGM}. We then describe the closure operator corresponding to the reflection $\eqref{adj}$ between quandles and trivial quandles.

An internal equivalence relation $\xymatrix{R  \ar@<2pt>[r]^{r_1}   \ar@<-2pt>[r]_{r_2} & X}$ on an object $X$ in $\mathcal C$ (see \cite{Borceux}, Section $2.5$, for instance), where $r_1$ and $r_2$ denote the projections, is \emph{effective}~\cite{Barr} if there is an arrow $g \colon X \rightarrow Y$ such that $(R, r_1, r_2)$ is the \emph{kernel pair} of $g$, i.e. the following square is a pullback:
\[\begin{tikzpicture}[scale=0.2]
\node[] (M) at (-4,4) {$R$};
\node[] (X) at (4,4) {$X$};
\node[] (M') at (-4,-4) {$X$};
\node[] (X') at (4,-4) {$Y$};
\draw[->] (M) to node[above]{$r_2$} (X);   
\draw[->] (M) to node[left]{$r_1$} (M');
\draw[->] (X) to node[right]{$g$} (X');
\draw[->] (M') to node[below]{$g$} (X');    
\end{tikzpicture}\]
We denote such an equivalence relation simply by $R$. If $f \colon Y \rightarrow X$ is an arrow in~$\mathcal C$, we write $f^{-1}(R)$ for the equivalence relation on $Y$ which is the inverse image of $R$ along $f$: it is obtained by composing the dotted arrow in the following pullback with the projections $\xymatrix{Y \times Y  \ar@<2pt>[r]^-{p_1}   \ar@<-2pt>[r]_-{p_2} & Y}$:
\[\begin{tikzpicture}[scale=0.3]
\node[] (M) at (-4,4) {$f^{-1}(R)$};
\node[] (X) at (4,4) {$R$};
\node[] (M') at (-4,-4) {$Y \times Y$};
\node[] (X') at (4,-4) {$X \times X$};
\draw[->] (M) to (X);   
\draw[->,dotted] (M) to (M');
\draw[->] (X) to node[right]{$(r_1,r_2)$} (X');
\draw[->] (M') to node[below]{$f\times f$} (X');    
\end{tikzpicture}\]

\begin{definition} \cite{BGM}
	An \emph{effective closure operator} $c$ on effective equivalence relations in a regular category $\mathcal C$ consists in giving, for every effective equivalence relation $R$ on an object $X$, another effective equivalence $c_X(R)$, called the \emph{closure} of $R$. This assignment has to satisfy the following properties, where $R$ and $S$ are effective equivalence relations on $X$, $f \colon Y \rightarrow X$ is a morphism in $\mathcal{C}$:
	\begin{itemize}
		\item[(1)] $R \le c_X(R)$;
		\item[(2)] $R \le S$ implies $c_X(R) \le c_X(S)$;
		\item[(3)] $c_Y({f^{-1}(R)}) \le f^{-1}(c_X({R}))$;
		\item[(4)] $c_X(c_X({R})) = c_X({R})$.
		\item[(5)] if $f \colon Y \rightarrow X$ is a \emph{regular epimorphism}, one then has the equality $$c_Y({f^{-1}(R)}) = f^{-1}(c_X({R})).$$
	\end{itemize}
\end{definition}
By an \emph{$\mathcal E$-reflective subcategory $\mathcal{X}$} of $\mathcal{C}$ we shall mean a full reflective subcategory $\mathcal{X}$ of a regular category $\mathcal{C}$ 

\begin{equation}\label{generaladj}
\vcenter{\hbox{\begin{tikzpicture}[scale=0.3]
		\node[] (X) at (-5,0) {$\mathcal C$};
		\node[] (Y) at (5,0) {$\mathcal X$};
		\node[] (C) at (0,0) {$\perp$};
		\draw[->] (X) to [bend left=25] node[above]{$F$} (Y);
		\draw[<-] (X) to [bend right=25] node[below]{$U$} (Y);
		\end{tikzpicture}}\tag{B}}
\end{equation}
with the property 
that each component $\eta_X \colon X \rightarrow UF (X)$ of the unit $\eta$ of the adjunction is in $\mathcal E$ (i.e. a regular epimorphism).
$\mathcal E$-reflective subcategories of a regular category can be characterized in terms of effective closure operators as follows:

\begin{theorem}\cite{BGM}
	Let $\mathcal{C}$ be a regular category. There is a bijection between the $\mathcal E$-reflective subcategories of $\mathcal{C}$ and the effective closure operators in $\mathcal C$.
\end{theorem}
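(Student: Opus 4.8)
The plan is to set up a pair of mutually inverse constructions. Given an $\mathcal E$-reflective subcategory $\mathcal X$ as in \eqref{generaladj}, and an effective equivalence relation $R$ on an object $X$, observe first that, since $\catC$ is regular, $R$ is the kernel pair of its own coequalizer $q_R \colon X \to X/R$ (take the $(\mathcal E,\mathcal M)$-factorization of any arrow whose kernel pair is $R$; the regular epimorphism part is the coequalizer of $R$). I would then define
\[ c_X(R) \coloneqq \Eq(\eta_{X/R} \circ q_R) = q_R^{-1}\bigl(\Eq(\eta_{X/R})\bigr), \]
which is itself a kernel pair, hence an effective equivalence relation. Axioms (1)--(3) are then routine manipulations with kernel pairs and naturality of $\eta$, using the elementary fact that $\Eq(g \circ h) \supseteq \Eq(h)$. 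Axiom (5) holds because, for a regular epimorphism $f \colon Y \to X$, the composite $q_R \circ f$ is again a regular epimorphism and hence the coequalizer of $f^{-1}(R) = \Eq(q_R \circ f)$, so that both sides of (5) equal $f^{-1}\bigl(\Eq(\eta_{X/R}\circ q_R)\bigr)$.

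The genuinely delicate axiom is idempotency (4), and I expect it to be the main obstacle. Writing the $(\mathcal E,\mathcal M)$-factorization $\eta_{X/R}\circ q_R = m \circ e$ with $e \colon X \to I$ a regular epimorphism, one has $X/c_X(R) = I$ with quotient $e$, whence $c_X(c_X(R)) = \Eq(\eta_I \circ e)$; it therefore suffices to show that $\eta_I$ is an isomorphism, i.e. that $I \in \mathcal X$. For this I would establish the key lemma that an $\mathcal E$-reflective subcategory of a regular category is closed under subobjects: if $n \colon B \to A$ is a monomorphism with $A \in \mathcal X$, naturality of $\eta$ gives $n = \eta_A^{-1}\circ F(n)\circ \eta_B$, so $\eta_B$ is a monomorphism and, being also a regular epimorphism, an isomorphism. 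Applying this to the subobject $I \hookrightarrow F(X/R) \in \mathcal X$ yields $I \in \mathcal X$.

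Conversely, from an effective closure operator $c$ I would define $\mathcal X \coloneqq \{\, X \in \catC \mid c_X(\Delta_X) = \Delta_X \,\}$, where $\Delta_X = \Eq(1_X)$ is effective, and let $\eta_X \colon X \to F(X)$ be the coequalizer of the effective equivalence relation $c_X(\Delta_X)$. Since $\eta_X^{-1}(\Delta_{F(X)}) = c_X(\Delta_X)$, axioms (5) and (4) give $\eta_X^{-1}(c_{F(X)}(\Delta_{F(X)})) = c_X(c_X(\Delta_X)) = c_X(\Delta_X) = \eta_X^{-1}(\Delta_{F(X)})$; and because $\eta_X$ is a regular epimorphism, taking inverse images along it reflects the order on equivalence relations, so $c_{F(X)}(\Delta_{F(X)}) = \Delta_{F(X)}$, i.e. $F(X) \in \mathcal X$. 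For the universal property, if $f \colon X \to A$ with $A \in \mathcal X$ then $c_X(\Eq(f)) \le c_X(f^{-1}(\Delta_A)) \le f^{-1}(c_A(\Delta_A)) = \Eq(f)$ by (2) and (3), so $\Eq(f)$ is closed; hence $c_X(\Delta_X) \le c_X(\Eq(f)) = \Eq(f)$ by (2), and $f$ factors through $\eta_X$, uniquely since $\eta_X$ is an epimorphism. This makes $\mathcal X$ reflective with unit $\eta$ pointwise a regular epimorphism.

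Finally, the two constructions are mutually inverse. Starting from an $\mathcal E$-reflective $\mathcal X$, the associated closure operator has $c_X(\Delta_X) = \Eq(\eta_X)$, which equals $\Delta_X$ precisely when $\eta_X$ is a monomorphism, i.e. when the regular epimorphism $\eta_X$ is an isomorphism, i.e. when $X \in \mathcal X$; so $\mathcal X$ is recovered. Starting from $c$, the reflection it produces has unit $\eta_X$ equal to the coequalizer of $c_X(\Delta_X)$, and the re-derived closure operator sends $R$ to $q_R^{-1}(\Eq(\eta_{X/R})) = q_R^{-1}(c_{X/R}(\Delta_{X/R}))$; since $q_R$ is a regular epimorphism with $q_R^{-1}(\Delta_{X/R}) = R$, axiom (5) gives $q_R^{-1}(c_{X/R}(\Delta_{X/R})) = c_X(q_R^{-1}(\Delta_{X/R})) = c_X(R)$, recovering $c$.
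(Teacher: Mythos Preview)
The paper does not actually prove this theorem: it cites Theorem~2.3 in~\cite{BGM} for the proof and only recalls, for the reader's convenience, the construction of $c_X(R)$ from an $\mathcal E$-reflective subcategory as $c_X(R)=\Eq(\eta_{X/R}\circ f)=f^{-1}(\Eq(\eta_{X/R}))$, with $f\colon X\to X/R$ the canonical quotient. Your construction in that direction is exactly the one the paper recalls, so on the only point of comparison available there is agreement.

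Beyond that, you supply what the paper omits: the verification of axioms (1)--(5), the converse construction $\mathcal X=\{X\mid c_X(\Delta_X)=\Delta_X\}$ with reflection unit the coequalizer of $c_X(\Delta_X)$, and the check that the two constructions are mutually inverse. Your argument is correct. The one step that deserves a word of care is the ``key lemma'' that an $\mathcal E$-reflective subcategory is closed under subobjects: your justification is right (from $\eta_A\circ n=UF(n)\circ\eta_B$ and $\eta_A$ invertible one gets that $n$ factors through $\eta_B$, and a left factor of a monomorphism is a monomorphism), but it is worth stating it that way rather than as the slightly cryptic equation $n=\eta_A^{-1}\circ F(n)\circ\eta_B$. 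Likewise, in the converse direction you use that for a regular epimorphism $f$ the inverse-image map $f^{-1}$ on (effective) equivalence relations is order-reflecting; this is true in a regular category because $f(f^{-1}(S))=S$, and it is the point that makes your deduction $c_{F(X)}(\Delta_{F(X)})=\Delta_{F(X)}$ go through. With these remarks your proposal is a complete and correct proof, consistent with the construction the paper records.
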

The existence of this bijection was proved in Theorem 2.3 in~\cite{BGM}. In order to make the article more self-contained, we recall how the closure $c_X (R)$ of an effective equivalence relation $R$ on $X$ is defined starting from an $\mathcal E$-reflective subcategory $\mathcal X$ of a regular category $\mathcal C$ as in \eqref{generaladj}. One first takes the canonical quotient $f \colon X \rightarrow X/R$, and then considers the inverse image $f^{-1} (\Eq (\eta_{X/R}))$ along $f$ of the kernel pair $\Eq(\eta_{X/R})$ of the $X/R$-component $\eta_{X/R}$ of the unit of the adjunction. Equivalently, the closure $c_X(R)$ can be defined as the kernel pair of the arrow ~$\eta_{X/R} \circ f \colon X \rightarrow UF(X/R)$ (this also shows that the equivalence relation $c_X(R)$ is effective).

\begin{examples}
	The category $\mathsf{Grp(Comp)}$ of compact (Hausdorff) groups is a regular category, and it contains as full $\mathcal E$-reflective subcategory the category $\mathsf{Grp(Prof)}$ of profinite groups. Here the $\mathcal E$-reflection of a compact group $X$ is given by the quotient $F(X) = X /\Gamma_X(0)$ by its connected component $\Gamma_X(0)$ of~$0$.
	The closure $c_X (R)$ of an effective equivalence relation $R$ on $X$ in the category of compact groups corresponding to the $\mathcal E$-reflection
	\begin{equation*}
	\vcenter{\hbox{\begin{tikzpicture}[scale=0.3]
			\node[] (X) at (-5,0) {${ \mathsf{Grp(Comp)} }$};
			\node[] (Y) at (5,0) {${ \mathsf{Grp(Prof)}}$};
			\node[] (C) at (0,0) {$\perp$};
			\draw[->] (X) to [bend left=25] node[above]{$F$} (Y);
			\draw[<-] (X) to [bend right=25] node[below]{$U$} (Y);
			\end{tikzpicture}}}
	\end{equation*}
	is given by $c_X (R)= R \circ \Gamma_X$, where two elements $x$ and $y$ in $X$ belong to the equivalence relation $\Gamma_X$ if they belong to the same connected component:
	$$\Gamma_X = \{ (x,y) \in X \times X \, \mid \, \Gamma_X(x) = \Gamma_X(y) \}.$$
	As shown in \cite{BGM} the same result still holds if the algebraic theory of groups is replaced by any Mal'tsev theory \cite{Smith} (such as the theories of rings, Lie algebras, loops, crossed modules), the proof of this essentially relying on the fact that the category of compact (Hausdorff) models of a Mal'tsev theory is a regular Mal'tsev category \cite{CLP}. \end{examples}

In what follows we shall be interested in proving a result, similar to the one recalled here above, in the case of the adjunction \eqref{adj} within the category of quandles, although this latter is not a Mal'tsev category.
The category $\Qndt$ of trivial quandles is an $\mathcal E$-reflective subcategory of the category $\Qnd$ of quandles, and this latter is a variety of universal algebras: any equivalence relation $R$ on a quandle $X$ in $\Qnd$ is then effective. An internal equivalence relation $R$ on~$X$ in $\Qnd$ is an equivalence relation on the underlying set of $X$ which is also a subquandle of the product quandle $X \times X$, i.e. a \emph{congruence} in the terminology of universal algebra \cite{BS}.
The congruence $\mathsf{Eq}(\eta_X)$ for a quandle $X$ in this case is also denoted by $\sim_{\mathsf{Inn}(X)}$ \cite{BLRY}, and is 
defined as follows: $(x,y) \in \sim_{\mathsf{Inn}(X)}$ if and only if $x$ and $y$ are in the same connected component: $[x]_X = [y]_X$. As it follows from Lemma $1.3$ in \cite{EG}, these congruences permute with any other equivalence relation $R$ on $X$ in $\mathsf{Qnd}$: 
\begin{lemma} \label{permutability}
	For any quandle $X$, the congruence $\sim_{\mathsf{Inn}(X)}$ permutes, in the sense of the composition of relations, with any congruence $R$ on $X$ in $\mathsf{Qnd}$:
\end{lemma}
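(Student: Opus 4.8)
The plan is to prove the equality of composite relations ${\sim_{\Inn(X)}} \circ R = R \circ {\sim_{\Inn(X)}}$, and to reduce this to a single inclusion. Both $R$ and $\sim_{\Inn(X)}$ are symmetric: indeed $\sim_{\Inn(X)}$ is a congruence on $X$ (it is the kernel pair $\Eq(\eta_X)$ of the unit, as recalled above), so composing it with $R$ makes sense as composition of equivalence relations. Since for symmetric relations $S,T$ one has $(S\circ T)^{\mathrm{op}} = T^{\mathrm{op}}\circ S^{\mathrm{op}} = T\circ S$, any inclusion $S\circ T \subseteq T\circ S$ yields, upon passing to opposite relations, the reverse inclusion, hence equality. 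It therefore suffices to check that ${\sim_{\Inn(X)}} \circ R \subseteq R \circ {\sim_{\Inn(X)}}$.

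So I would take a pair $(x,z) \in {\sim_{\Inn(X)}} \circ R$: there is some $y \in X$ with $(x,y) \in R$ and $y \sim_{\Inn(X)} z$, i.e.\ $z = y \lhd^{\alpha_1} y_1 \lhd^{\alpha_2} \cdots \lhd^{\alpha_n} y_n$ for suitable $y_i \in X$ and $\lhd^{\alpha_i} \in \{\lhd,\lhd^{-1}\}$. The one real observation is that $R$, being a congruence, is a subquandle of $X\times X$, hence closed under $\lhd$ and $\lhd^{-1}$; being reflexive, it moreover contains every pair $(y_i,y_i)$. Applying the chain of operations componentwise to the pair $(x,y)\in R$ then gives
\[
(x,y) \lhd^{\alpha_1} (y_1,y_1) \lhd^{\alpha_2} \cdots \lhd^{\alpha_n} (y_n,y_n)
= \bigl( x \lhd^{\alpha_1} y_1 \lhd^{\alpha_2} \cdots \lhd^{\alpha_n} y_n,\ z \bigr) \in R.
\]
Setting $w := x \lhd^{\alpha_1} y_1 \lhd^{\alpha_2} \cdots \lhd^{\alpha_n} y_n$, we have $(w,z)\in R$, while $w$ is obtained from $x$ by a chain of inner automorphisms, so $x \sim_{\Inn(X)} w$. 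Hence $(x,z) \in R \circ {\sim_{\Inn(X)}}$, which is the desired inclusion.

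I do not expect a serious obstacle here: the conceptual content is precisely that a chain witnessing membership in $\sim_{\Inn(X)}$ can be transported along a pair of $R$ by acting componentwise, using only closure of $R$ under the operations and reflexivity of $R$ on the parameters $y_i$ — the same componentwise/idempotency idea already exploited in the proof of Lemma~\ref{products} and of Proposition~\ref{properties}(4). The only point requiring a little care is bookkeeping with the convention for composition of relations, so that ``there exists $y$ with $(x,y)\in R$ and $(y,z)\in{\sim_{\Inn(X)}}$'' is matched to the correct composite; but since both relations are symmetric this choice is immaterial, which is exactly what makes the reduction to one inclusion legitimate.
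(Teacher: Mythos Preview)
Your argument is correct. The reduction to a single inclusion via symmetry is standard, and the key step---pushing the chain $y \lhd^{\alpha_1} y_1 \cdots \lhd^{\alpha_n} y_n$ through the congruence $R$ by acting componentwise with the reflexive pairs $(y_i,y_i)$---is exactly the right idea and uses only that $R$ is a reflexive subquandle of $X\times X$.

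Note, however, that the paper does not actually supply a proof of this lemma: it is stated as a direct consequence of Lemma~1.3 in \cite{EG} and is simply quoted. So there is no in-paper argument to compare with. Your direct proof is the natural one, and is in fact the argument underlying the cited result (which is phrased there slightly more generally, for surjective quandle homomorphisms rather than just the unit $\eta_X$, but the mechanism is identical). In that sense you have reconstructed the omitted proof rather than found an alternative route.
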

\begin{equation}
\sim_{\mathsf{Inn}(X)} \circ R = R \circ \sim_{\mathsf{Inn}(X)}.
\end{equation}

The corresponding effective closure operator can be easily described thanks to Lemma~\ref{permutability}:
\begin{proposition}\label{description}
	Let $R$ be a congruence on a quandle $X$. Then its effective closure~$c_X(R)$ in $\Qnd$ corresponding to the reflection \eqref{adj} is given by $$c_X(R) =  \sim_{\mathsf{Inn}(X)} \circ R = R \circ  \sim_{\Inn(X)}.$$
\end{proposition}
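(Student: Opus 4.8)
The plan is to unwind the construction of the effective closure recalled just before the statement. Since $\Qnd$ is a variety, the congruence $R$ is effective; writing $f \colon X \to X/R$ for the canonical quotient, the closure is $c_X(R) = f^{-1}(\Eq(\eta_{X/R})) = \Eq(\eta_{X/R}\circ f)$, where $\eta_{X/R}\colon X/R \to U\pi_0(X/R)$ is the unit component. Concretely, $(x,y)\in c_X(R)$ precisely when the classes $[x]_R$ and $[y]_R$ lie in one and the same connected component of the quotient quandle $X/R$. I would first identify this congruence with the join $R \vee \sim_{\Inn(X)}$ in the lattice of congruences of $X$, and then invoke Lemma~\ref{permutability} to rewrite the join as the two announced composites.

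For $R \vee \sim_{\Inn(X)} \le c_X(R)$: the homomorphism $\eta_{X/R}\circ f$ lands in a trivial quandle, it obviously identifies $R$-equivalent elements (it factors through $f$), and it also identifies $\sim_{\Inn(X)}$-equivalent elements --- if $y = x \lhd^{\alpha_1} z_1 \lhd^{\alpha_2}\cdots\lhd^{\alpha_n} z_n$, then applying $f$ gives $[y]_R = [x]_R \lhd^{\alpha_1}[z_1]_R\lhd^{\alpha_2}\cdots\lhd^{\alpha_n}[z_n]_R$, so $[y]_R$ and $[x]_R$ lie in the same component of $X/R$, whence $\eta_{X/R}(f(x)) = \eta_{X/R}(f(y))$. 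Hence $c_X(R) = \Eq(\eta_{X/R}\circ f)$ contains both $R$ and $\sim_{\Inn(X)}$, thus their join. For the reverse inclusion $c_X(R) \le R \circ \sim_{\Inn(X)}$: if $(x,y)\in c_X(R)$, there is a chain $[x]_R\lhd^{\alpha_1}\bar z_1\lhd^{\alpha_2}\cdots\lhd^{\alpha_n}\bar z_n = [y]_R$ in $X/R$; since $f$ is surjective I lift each $\bar z_i$ to some $z_i \in X$ with $f(z_i)=\bar z_i$ and set $x' := x\lhd^{\alpha_1} z_1\lhd^{\alpha_2}\cdots\lhd^{\alpha_n} z_n$, so that $(x,x')\in\sim_{\Inn(X)}$ and $f(x') = [y]_R$, i.e. $(x',y)\in R$; therefore $(x,y)$ lies in the composite $\sim_{\Inn(X)}\circ R = R\circ\sim_{\Inn(X)}$.

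Combining the two inclusions with the well-known fact that any composite of two permuting equivalence relations equals their join, Lemma~\ref{permutability} yields $c_X(R) = R\vee\sim_{\Inn(X)} = \sim_{\Inn(X)}\circ R = R\circ\sim_{\Inn(X)}$, as claimed. The only step that is not pure definition-chasing is precisely this appeal to Lemma~\ref{permutability}: without permutability the relation $\sim_{\Inn(X)}\circ R$ need not even be transitive, so it would be meaningless to assert that it is the closure; it is the permutability result of \cite{EG} that both makes the formula well-typed and lets the chain of identifications above close up. The only mild care needed elsewhere is in lifting the connecting chain from $X/R$ back to $X$, which is immediate from the surjectivity of $f$.
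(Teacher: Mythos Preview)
Your proof is correct and follows essentially the same route as the paper's: identify $c_X(R)$ with the join $R\vee\sim_{\Inn(X)}$ and then invoke Lemma~\ref{permutability} to rewrite this join as the composite. The only difference is in how the inclusion $c_X(R)\le R\vee\sim_{\Inn(X)}$ is obtained: the paper observes that the naturality square for $\eta$ at $f\colon X\to X/R$ is a pushout (since $\Qndt$ is closed under quotients in $\Qnd$), whence the kernel pair of the diagonal $\eta_{X/R}\circ f = U\pi_0(f)\circ\eta_X$ is exactly $R\vee\sim_{\Inn(X)}$; you instead prove $c_X(R)\le R\circ\sim_{\Inn(X)}$ directly by lifting a connecting chain in $X/R$ through the surjection $f$. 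Your argument is the explicit element-level unpacking of the paper's categorical one, and is perfectly valid.
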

\begin{proof}
	The closure~$c_X(R)$ of a congruence $R$ on a quandle $X$ is constructed as the inverse image $f^{-1}(\sim_{\Inn(X/R)})$
	of the congruence~$\sim_{\Inn(X/R)}$ along the canonical quotient $f \colon X \rightarrow X/R$:
	\[\begin{tikzpicture}[scale=0.3]
	\node[] (M) at (-4,4) {$c_X(R)$};
	\node[] (X) at (4,4) {$\sim_{\Inn(X/R)}$};
	\node[] (M') at (-4,-4) {$X$};
	\node[] (X') at (4,-4) {$X/R$};
	\node[] (Y) at (-12,-4) {$R$};
	\draw[->] (M) to (X);   
	\draw[->,transform canvas={xshift=1ex}] (M) to (M');
	\draw[->,transform canvas={xshift=-1ex}] (M) to (M');
	\draw[->,transform canvas={xshift=1ex}] (X) to node[right]{$p_2$} (X');
	\draw[->,transform canvas={xshift=-1ex}] (X) to node[left]{$p_1$} (X');
	\draw[->] (M') to node[below]{$f$} (X'); 
	\draw[->,transform canvas={yshift=1ex}] (Y) to node[above]{$f_1$} (M');
	\draw[->,transform canvas={yshift=-1ex}] (Y) to node[below]{$f_2$} (M');
	\draw[->,dotted] (Y) to (M);
	\end{tikzpicture}\]
	
	The commutative square 
	\[\begin{tikzpicture}[scale=0.35]
	\node[] (M) at (-4,4) {$X$};
	\node[] (X) at (4,4) {$X/R$};
	\node[] (M') at (-4,-4) {$U\pi_0(X)$};
	\node[] (X') at (4,-4) {$U\pi_0(X/R)$};
	\draw[->] (M) to node[above]{$f$} (X);   
	\draw[->] (M) to node[left]{$\eta_X$} (M');
	\draw[->] (X) to node[right]{$\eta_{X/R}$} (X');
	\draw[->] (M') to node[below]{$U\pi_0(f)$} (X');    
	\end{tikzpicture}\]
	induced by the units of the adjunction \eqref{adj} is a pushout, since $\Qndt$ is stable under quotients in $\Qnd$. The fact that the congruences $\sim_{\Inn(X)}$ and $R$ permute (by Lemma \ref{permutability}) implies the following equalities: $$c_X(R)= R  \vee \sim_{\Inn(X)} = R \circ \sim_{\Inn(X)},$$
	where  $R  \vee \sim_{\Inn(X)}$ denotes the supremum of $R$ and $\sim_{\Inn(X)}$ as congruences on the quandle $X$. Indeed, the supremum $R  \vee \sim_{\Inn(X)}$ is precisely the kernel pair of the composite of $U\pi_0(f) \circ \eta_X$, which certainly contains both $R$ and $\sim_{\Inn(X)}$, thus also $R \circ \sim_{\Inn(X)}$. However, since 
	$R \circ \sim_{\Inn(X)}$ is already a congruence in $\Qnd$ (by  Lemma \ref{permutability}), it is then $R  \vee \sim_{\Inn(X)}$.
\end{proof}
\begin{remark}
	Observe that, for any $X \in \Qnd$, the congruence $\Eq({\eta_X})= \sim_{\Inn(X)}$ is simply the closure of the equality relation~$\Delta_X$ on $X$: $$c_X({\Delta}_X)= \sim_{\Inn(X)}.$$
	It is not difficult to check that the effective closure operator associated with \eqref{adj} also satisfies the property that $$f(c_X(\Delta_X)) = c_Y (\Delta_Y) $$
	for any regular epimorphism $f \colon X \rightarrow Y$: this essentially follows from Corollary~$1.8$ in \cite{EG}. One can also show that, for any congruences $R$ and $S$ on the same quandle $X$, $$c_X (R \vee S) = c_X(R) \vee c_X(S).$$ Indeed, both the congruences $c_X (R \vee S)$ and $c_X(R) \vee c_X(S)$ turn out to be the kernel congruence $\mathsf{Eq}(\eta_P \circ i_1 \circ q_R)$ of the composite $\eta_P \circ i_1 \circ q_R$, where $i_1$ and $i_2$ are defined by the following pushout ($q_R$ and $q_S$ are the canonical quotients):
	\[\begin{tikzpicture}[scale=0.3]
	\node[] (M) at (-4,4) {$X$};
	\node[] (X) at (4,4) {$X/S$};
	\node[] (M') at (-4,-4) {$X/R$};
	\node[] (X') at (4,-4) {$P.$};
	\draw[->] (M) to node[above]{$q_S$} (X);   
	\draw[->] (M) to node[left]{$q_R$} (M');
	\draw[->] (X) to node[right]{$i_2$} (X');
	\draw[->] (M') to node[below]{$i_1$} (X');    
	\end{tikzpicture}\]
	
\end{remark}

\end{document}